\documentclass[11pt]{article}
\usepackage[margin=1.45in]{geometry}

\usepackage{mdwlist}
\usepackage{enumerate}


\usepackage{amsmath,amssymb}
\usepackage{graphics, subfigure, float}
\usepackage{fp, calc}
\usepackage{hyperref}
\usepackage{url}

\usepackage[T1]{fontenc} 
\usepackage{fourier}
\usepackage{bm}


\usepackage{amscd,amsthm}





\usepackage{pst-all}
\usepackage{pstricks-add}
\usepackage{pst-func}
\newpsobject{showgrid}{psgrid}{subgriddiv=1,griddots=10,gridlabels=6pt}
\usepackage{verbatim, comment}
\usepackage{datetime}

\newtheoremstyle{theorem}{1em}{1em}{\slshape}{0pt}{\bfseries}{.}{ }{}
\theoremstyle{theorem}
\newtheorem{theorem}{Theorem}

\newtheorem{conjecture}{Conjecture}
\newtheorem*{theorem*}{Theorem}

\newtheorem{lemma}[theorem]{Lemma}

\newtheorem*{claim*}{Claim}

\theoremstyle{remark}
\newtheorem{remark}{Remark}
\newtheorem*{remark*}{Remark}

\newtheoremstyle{example}{1em}{1em}{}{0pt}{\bfseries}{.}{ }{}


\providecommand{\setR}{\mathbb{R}}

\psset{linewidth=1pt, arrowsize=6pt}

\usepackage{calrsfs} 
\DeclareMathAlphabet{\pazocal}{OMS}{zplm}{m}{n}

\usepackage[displaymath,textmath,graphics, subfigure, floats]{preview} 
\PreviewEnvironment{center} 

\makeatother

\title{Balancing Polynomials in the Chebyshev Norm}
\begin{document}
\date{}
\author{Victor Reis \thanks{University of Washington, Seattle. Email: {\tt voreis@uw.edu}.}}
\maketitle
\begin{abstract}

Given $n$ polynomials $p_1, \dots, p_n$ of degree at most 
 $n$ with $\|p_i\|_\infty \le 1$ for $i \in [n]$, we show there exist signs $x_1, \dots, x_n \in \{-1,1\}$ so that 
\[\Big\|\sum_{i=1}^n x_i p_i\Big\|_\infty < 30\sqrt{n}, \]
where $\|p\|_\infty := \sup_{|x| \le 1} |p(x)|$. This result extends the Rudin-Shapiro sequence, which gives an upper bound of $O(\sqrt{n})$ for the Chebyshev polynomials $T_1, \dots, T_n$, and can be seen as a polynomial analogue of Spencer's "six standard deviations" theorem.

\end{abstract}

\section{Introduction}

Given a univariate polynomial $p : \setR \to \setR$, the \textit{Chebyshev norm} $\|p\|_\infty := \sup_{|x| \le 1} |p(x)|$ was introduced in 1854 by Chebyshev \cite{Cheb-1854} who also defined the \textit{Chebyshev polynomials} $T_n$ with $\|T_n\|_\infty = 1$ which satisfy $T_n(\cos \theta) = \cos(n\theta)$ for all $\theta \in \setR$.

The \textit{Rudin-Shapiro sequence}, independently discovered by Rudin \cite{10.2307/2033608} and Shapiro \cite{Shap-1952} provides signs $x_i \in \{-1,1\}$ with the property that for any positive integer $n$ and any complex number $z$ on the unit circle, we have the upper bound $|\sum_{i=1}^n x_i z^i| = O(\sqrt{n})$. By looking at the real part, this readily implies $\|\sum_{i=1}^n x_i T_i\|_\infty = O(\sqrt{n})$.

 Here we prove the following generalization:

\begin{theorem}
Given $n$ polynomials $p_1, \dots, p_n$ of degree at most $n$ with $\|p_i\|_\infty \le 1$ for $i \in [n]$, there exist signs $x_1, \dots, x_n \in \{-1,1\}$ so that 
\[\Big\|\sum_{i=1}^n x_i p_i\Big\|_\infty < 30\sqrt{n}. \]
 
\end{theorem}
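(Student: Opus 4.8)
The plan is to reduce the continuous Chebyshev-norm balancing problem to a finite-dimensional vector balancing problem of Spencer type, and then invoke the partial-coloring (entropy) method. First I would discretize the sup norm. Writing $x = \cos\theta$, any polynomial $p$ of degree at most $n$ becomes a trigonometric polynomial $q(\theta) = p(\cos\theta)$ of degree at most $n$, with $\|p\|_\infty = \sup_{\theta}|q(\theta)|$, and by evenness the supremum is attained on $[0,\pi]$. Bernstein's inequality gives $\|q'\|_\infty \le n\|q\|_\infty$. Hence, placing a uniform grid $\theta_j = j\pi/N$, $j = 0,\dots,N$, with $N = \lceil Cn\rceil$, every $\theta\in[0,\pi]$ lies within $\pi/(2N)$ of some $\theta_j$, so a mean value estimate yields
\[\|q\|_\infty \le \max_{0\le j\le N}|q(\theta_j)| + \frac{\pi}{2N}\,n\,\|q\|_\infty, \quad\text{i.e.}\quad \|q\|_\infty \le \frac{1}{1 - \pi/(2C)}\max_{0\le j\le N}|q(\theta_j)|.\]
Thus controlling $\|\sum_i x_i p_i\|_\infty$ reduces, up to the factor $\kappa := (1-\pi/(2C))^{-1}$, to controlling the values of $\sum_i x_i p_i$ at the $N+1 = O(n)$ grid points $\cos\theta_j$.

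Second, I would recast this as vector balancing. Define $v_i \in \setR^{N+1}$ by $(v_i)_j = p_i(\cos\theta_j)$; since $\|p_i\|_\infty \le 1$ we have $\|v_i\|_\infty \le 1$. We then have exactly $n$ vectors living in dimension $N+1 \approx Cn$, and we seek signs $x\in\{-1,1\}^n$ minimizing $\|\sum_i x_i v_i\|_\infty$. The whole point of choosing $N$ proportional to $n$ (which is legitimate because the signed sum still has degree at most $n$, so a grid of size $\Theta(n)$ resolves it) is that the number of vectors and the ambient dimension are within a constant factor of each other — precisely the regime of Spencer's "six standard deviations" theorem.

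Third, I would apply Spencer's theorem, equivalently the partial-coloring / entropy method. For $n$ vectors with $\|v_i\|_\infty\le 1$ in dimension $m$, there is a signing with $\|\sum_i x_i v_i\|_\infty = O(\sqrt{n\log(2m/n)})$; with $m\approx Cn$ this is $O(\sqrt{n\log C})$. I would recover this through the standard partial-coloring lemma: repeatedly color half of the remaining variables with a per-round $\ell_\infty$ discrepancy of order $\sqrt{n\log(2m/n)}$, so that the geometrically decreasing contributions sum to the same order. Combining with the factor $\kappa$ and optimizing $C$ (trading the shrinking $\kappa\to 1$ against the growing $\sqrt{\log C}$) is what produces the explicit bound $30\sqrt{n}$.

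The main obstacle is twofold, and both pieces concern constants rather than existence. The genuinely essential idea is that a naive uniformly random signing only gives $O(\sqrt{n\log n})$ (a union bound over the $\Theta(n)$ grid points), and removing the logarithm requires the partial-coloring method exploiting $m = O(n)$; this is the heart of the argument. Quantitatively, the work is in making the entropy/partial-coloring step yield a clean enough constant, and in choosing $C$ so that the product of $\kappa$, the discrepancy constant, and $\sqrt{\log C}$ comes out below $30$ — where $C$ must be taken safely above $\pi/2$ (so that $\kappa$ is finite) and large enough to keep $\kappa$ near $1$, yet small enough that $\log C$ does not inflate the discrepancy term.
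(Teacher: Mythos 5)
Your proposal follows essentially the same route as the paper: evaluate the signed sum at a uniform $\theta$-grid of $\Theta(n)$ points $\cos\theta_j$, apply Spencer's theorem to the vectors of grid values, and lift the grid bound back to the full Chebyshev norm via Bernstein's inequality for trigonometric polynomials. Your discretization step is precisely the paper's key Lemma 7 (there with grid $\cos(\theta_0 + k\pi/9n)$ and blow-up factor $(1-\pi/9)^{-1} < 5/3$), and your observation that random signs lose a $\sqrt{\log n}$ factor, which partial coloring removes, is exactly the point of the construction.

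The one genuine gap is the explicit constant. You route the discrepancy step through the asymptotic form of Spencer's theorem, $O(\sqrt{n\log(2m/n)})$ with $m \approx Cn$, and propose to recover $30\sqrt{n}$ by optimizing $C$ against $\kappa = (1-\pi/(2C))^{-1}$. But that form carries an unspecified constant, so no choice of $C$ can extract the stated bound $30\sqrt{n}$ from it; the trade-off you describe ($\kappa$ versus $\sqrt{\log C}$) cannot be closed without constant-explicit bookkeeping inside the partial-coloring argument, which is not standard and which you do not carry out. The paper avoids this entirely by using the explicit six-standard-deviations form: for $n$ vectors in $\setR^m$ with entries bounded by $1$ and $m \ge n$, there exist signs with $\ell_\infty$-discrepancy less than $6\sqrt{m}$ (this follows from the square case by padding with $m-n$ zero vectors). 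With a fixed grid of $m = 9n$ points this gives $6\sqrt{9n} = 18\sqrt{n}$ at the grid points, and the Bernstein factor $5/3$ yields $18 \cdot 5/3 = 30$. Note that the correct trade-off for explicit constants is $\kappa$ versus $6\sqrt{C}$, not $\sqrt{\log C}$. Your version of the lifting step is in fact slightly sharper than the paper's (nearest-grid-point distance $\pi/(2N)$ gives $\kappa = (1-\pi/18)^{-1} < 5/4$ at $N = 9n$, hence roughly $22\sqrt{n}$), so the only repair needed is to swap which form of Spencer's theorem you invoke; everything else in your argument stands.
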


We also show a matching lower bound using precisely the Chebyshev polynomials.
\begin{theorem}
For any choice of signs $x_1, \dots, x_n \in \{-1,1\}$ we have 
\[\Big\|\sum_{i=1}^n x_i T_i \Big\|_\infty \ge \sqrt{n/2}.\]
\end{theorem}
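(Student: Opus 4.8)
The plan is to lower-bound the Chebyshev ($L^\infty$) norm by an $L^2$ norm taken against a suitable probability measure, and then exploit the orthogonality of the Chebyshev polynomials to evaluate that $L^2$ norm exactly, regardless of the signs.

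Concretely, set $p := \sum_{i=1}^n x_i T_i$ and write $x = \cos\theta$, so that $T_i(\cos\theta) = \cos(i\theta)$ and hence $p(\cos\theta) = \sum_{i=1}^n x_i \cos(i\theta)$. Since $x$ ranges over $[-1,1]$ as $\theta$ ranges over $[0,\pi]$, the Chebyshev norm $\|p\|_\infty = \sup_{|x|\le 1}|p(x)|$ equals $\sup_{\theta}\big|\sum_{i=1}^n x_i\cos(i\theta)\big|$. The first step is to average the square of this trigonometric polynomial against the uniform probability measure $d\theta/\pi$ on $[0,\pi]$ and invoke the elementary bound that an $L^2$ average never exceeds the $L^\infty$ value.

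The second step is to compute that average explicitly. Expanding the square and using the orthogonality relation $\frac{2}{\pi}\int_0^\pi \cos(i\theta)\cos(j\theta)\,d\theta = \delta_{ij}$ (valid for $i,j \ge 1$), all cross terms vanish and each diagonal term contributes $x_i^2 = 1$, giving
\[
\frac{1}{\pi}\int_0^\pi \Big(\sum_{i=1}^n x_i \cos(i\theta)\Big)^2 \, d\theta = \frac{1}{2}\sum_{i=1}^n x_i^2 = \frac{n}{2}.
\]
Combining with $\frac{1}{\pi}\int_0^\pi p(\cos\theta)^2\,d\theta \le \|p\|_\infty^2$ yields $\|p\|_\infty^2 \ge n/2$, which is the claim. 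Equivalently, one can phrase the whole argument on $[-1,1]$ using the substitution $x=\cos\theta$, which turns $d\theta$ into the Chebyshev weight $dx/\sqrt{1-x^2}$ of total mass $\pi$, together with the orthogonality $\int_{-1}^1 T_i T_j\,\frac{dx}{\sqrt{1-x^2}} = \frac{\pi}{2}\delta_{ij}$.

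There is no serious obstacle here: the argument is the standard ``$L^2$ lower bound for $L^\infty$'' applied to an orthogonal family, and the only points requiring care are bookkeeping ones---fixing the normalization of the inner product and remembering that the factor $\tfrac12$ in the diagonal orthogonality constant is exactly what produces $\sqrt{n/2}$ rather than $\sqrt{n}$. I would also note that the bound is independent of the signs precisely because each $x_i^2 = 1$, so the identical computation shows the $L^2$ norm equals $\sqrt{n/2}$ for every sign pattern; this confirms that the Chebyshev polynomials saturate the order of magnitude in Theorem~1.
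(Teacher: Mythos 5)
Your proposal is correct and is essentially identical to the paper's own proof: both lower-bound $\|q\|_\infty$ by the $L^2$ average $\frac{1}{\pi}\int_0^\pi q^2(\cos\theta)\,\mathrm{d}\theta$ and then use orthogonality of $\cos(k\theta)$ on $[0,\pi]$ so that the cross terms vanish and the diagonal terms sum to $n/2$. Your added remark that the $L^2$ norm is exactly $\sqrt{n/2}$ for every sign pattern is a nice observation but does not change the argument.
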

We would like to remark that for i.i.d. \textit{random signs} $x_1, \dots, x_n \sim \{-1,1\}$, the work of Salem and Zygmund implies an upper bound of $O(\sqrt{n \log n})$ at least for bounded trigonometric polynomials \cite{salem1954}. Another line of work relating polynomials and discrepancy theory culminated on the recent construction of flat Littlewood polynomials \cite{balister2019flat}.

A notoriously difficult conjecture in discrepancy theory is that of Koml\'os \cite{Spencer1985SixSD}:

\begin{conjecture} There exists a constant $C$ so that for any $\bm{v}_1, \dots, \bm{v}_n \in \setR^n$ with $\|\bm{v}_i\|_2 \le 1$ for $i \in [n]$, we can find signs $x_1, \dots, x_n \in \{-1,1\}$ with the property that $\|\sum_{i=1}^n x_i \bm{v}_i\|_\infty \le C$.
\end{conjecture}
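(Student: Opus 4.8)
This final statement is the Komlós conjecture, which is widely open; I will therefore describe the strongest attack I know and pinpoint exactly where it stalls short of a constant. The natural reformulation is geometric: a coloring $\bm{x}\in\{-1,1\}^n$ produces the vector $\sum_{i=1}^n x_i \bm{v}_i$, and the goal is to land this vector inside the cube $C\cdot[-1,1]^n$ for an absolute constant $C$. The most powerful existence tool here is Banaszczyk's theorem: for unit vectors $\bm{v}_1,\dots,\bm{v}_n\in\setR^n$ and any symmetric convex body $K\subseteq\setR^n$ whose standard Gaussian measure satisfies $\gamma_n(K)\ge 1/2$, there exist signs $x_i\in\{-1,1\}$ with $\sum_{i=1}^n x_i\bm{v}_i\in 5K$. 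My first move would be to apply this with $K=t\cdot[-1,1]^n$ and take $t$ as small as possible subject to $\gamma_n(t\cdot[-1,1]^n)\ge 1/2$.

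The choice of $t$ is where the difficulty first surfaces. Since the Gaussian measure of the cube factorizes across coordinates, $\gamma_n(t\cdot[-1,1]^n)=(2\Phi(t)-1)^n$, and demanding that this exceed $1/2$ forces $\Phi(-t)=O(1/n)$, hence $t=\Theta(\sqrt{\log n})$ via the Gaussian tail bound. Thus Banaszczyk delivers $\|\sum_{i=1}^n x_i\bm{v}_i\|_\infty = O(\sqrt{\log n})$, which is the best bound currently known, but carries a stray logarithmic factor the conjecture forbids. To remove it one would want to either exploit that the cube is far from the worst-case body for which the $5K$ guarantee is sharp, or abandon the one-shot approach in favor of an iterative scheme.

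The classical iterative alternative is the partial-coloring method (Beck; Lov\'asz--Spencer--Vesztergombi; Gluskin; Giannopoulos): by a pigeonhole/entropy or Gaussian-measure argument one finds a fractional-to-$\pm1$ coloring of at least a constant fraction of the coordinates while keeping the partial discrepancy $O(1)$ in each coordinate, and then recurses on the uncolored part. Constructive versions are given by Bansal's random-walk rounding, the Lov\'ass--Meka random walk inside a polytope, the self-balancing walk of Alweiss--Liu--Sawhney, and the Gram--Schmidt walk of Bansal--Dadush--Garg--Lovett, several of which reprove Banaszczyk's bound with a clean potential function. Each round contributes $O(1)$ to each coordinate, but there are $\Theta(\log n)$ rounds, so the per-coordinate errors accumulate and a factor $\sqrt{\log n}$ reappears in the sum.

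The crux---and the reason the conjecture has resisted proof---is uniform $\ell_\infty$ control across all $n$ coordinates simultaneously. Every route I can see pays a $\sqrt{\log n}$ toll: a union bound over coordinates, a per-round discrepancy budget that merely adds rather than telescopes, or the Gaussian-measure threshold for the cube. A genuine proof would need a mechanism that couples the coordinates---perhaps a body-specific sharpening of Banaszczyk that uses the bounded $\psi_2$ structure of the cube, or a recursive coloring whose per-round loss cancels geometrically instead of summing. I do not see how to close this gap, which is entirely consistent with the conjecture's billing in the excerpt as ``notoriously difficult.''
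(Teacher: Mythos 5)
You were right not to manufacture an argument here: the statement is the Koml\'os conjecture, which the paper presents as Conjecture~1 precisely because it is open, offers no proof of, and uses only as a \emph{hypothesis} in Theorem~3 (``Suppose the Koml\'os conjecture holds with constant $C$\dots''). So there is no proof in the paper to compare against, and your decision to survey the obstruction rather than claim a proof is the correct response. Your survey is also accurate on the substance: applying Banaszczyk's theorem with $K = t\,[-1,1]^n$ forces $\gamma_n(K) = (2\Phi(t)-1)^n \ge 1/2$, hence $\Phi(-t) = O(1/n)$ and $t = \Theta(\sqrt{\log n})$, which is indeed the best known bound; the partial-coloring method pays $O(1)$ per coordinate per round over $\Theta(\log n)$ rounds, and no known scheme makes the per-round losses telescope; and the constructive walks you cite (Bansal; Lovett--Meka, which you misspell as ``Lov\'ass--Meka''; Alweiss--Liu--Sawhney; the Gram--Schmidt walk of Bansal--Dadush--Garg--Lovett) recover Banaszczyk-type $O(\sqrt{\log n})$ guarantees algorithmically but do not reach a constant. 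Your diagnosis that every known route pays a $\sqrt{\log n}$ toll for uniform control over all $n$ coordinates matches the consensus view of why the problem is hard.

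One small remark worth recording: the conjecture as stated in the paper is the ``square'' case ($n$ vectors in $\setR^n$), whereas the proof of Theorem~3 applies it to $n$ vectors living in $\setR^{9n}$. This is not a gap in your write-up, but if you ever invoke the square form in that setting, note that it suffices by padding: adjoin $8n$ zero vectors (which satisfy $\|\bm{v}_i\|_2 \le 1$ trivially) to obtain $9n$ vectors in $\setR^{9n}$, apply the conjecture, and discard the signs assigned to the zero vectors. The usual formulation in the literature is dimension-free from the start, which is what the paper implicitly uses.
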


We show that the Koml\'os conjecture implies a similar result with a weaker assumption:

\begin{theorem}
Let $p_1, \dots, p_n$ be polynomials of degree less than $n$ with $\int_{0}^\pi p_i^2 (\cos \theta) \ \mathrm{d} \theta \le 1$ for $i \in [n]$. Suppose the Koml\'os conjecture holds with constant $C$. Then there exist signs $x_1, \dots, x_n \in \{-1,1\}$ so that 
\[\Big\|\sum_{i=1}^n x_i p_i\Big\|_\infty < 3C\sqrt{n}. \]
\end{theorem}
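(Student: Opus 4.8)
The plan is to turn the Chebyshev-norm problem into a finite-dimensional vector-balancing problem to which the Komlós conjecture applies directly, via the substitution $x = \cos\theta$. Writing $q_i(\theta) := p_i(\cos\theta)$, each $q_i$ is an even trigonometric polynomial of degree less than $n$ (since $T_k(\cos\theta) = \cos(k\theta)$), and the three relevant quantities transform well: by the symmetry $q_i(2\pi - \theta) = q_i(\theta)$ the hypothesis becomes $\int_0^{2\pi} q_i^2(\theta)\,\mathrm{d}\theta = 2\int_0^\pi p_i^2(\cos\theta)\,\mathrm{d}\theta \le 2$, while the sup norm is preserved, $\|p_i\|_\infty = \sup_\theta |q_i(\theta)|$. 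The goal is now to choose signs making $\sup_\theta \big|\sum_i x_i q_i(\theta)\big|$ small.

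Next I would discretize using $m := \lceil 2\pi n\rceil$ equally spaced nodes $\theta_j := 2\pi j/m$ for $j = 0, \dots, m-1$. Two features of this grid drive the argument. First, \emph{quadrature exactness}: since $\sum_j e^{\mathrm{i}k\theta_j} = 0$ whenever $k \not\equiv 0 \pmod m$, the equispaced rule integrates every trigonometric polynomial of degree less than $m$ exactly; as $q_i^2$ has degree at most $2n-2 < m$, this gives $\frac{2\pi}{m}\sum_j q_i^2(\theta_j) = \int_0^{2\pi} q_i^2 \le 2$, hence $\sum_j q_i^2(\theta_j) \le m/\pi$. Second, \emph{net control via Bernstein}: for any real trigonometric polynomial $Q$ of degree less than $n$, Bernstein's inequality $\|Q'\|_\infty \le (n-1)\|Q\|_\infty$ combined with the mean value theorem and the half-spacing bound $\pi/m \le 1/(2n)$ yields $\|Q\|_\infty \le \frac{2n}{n+1}\max_j |Q(\theta_j)| < 2\max_j |Q(\theta_j)|$.

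Finally I would define vectors $v_i \in \setR^m$ by $(v_i)_j := \sqrt{\pi/m}\, q_i(\theta_j)$. Quadrature exactness gives $\|v_i\|_2^2 = \frac{\pi}{m}\sum_j q_i^2(\theta_j) \le 1$, so the Komlós hypothesis is met. Applying it produces signs $x_1, \dots, x_n$ with $\big\|\sum_i x_i v_i\big\|_\infty \le C$, i.e. $\max_j \big|\sum_i x_i q_i(\theta_j)\big| \le C\sqrt{m/\pi}$. Setting $Q := \sum_i x_i q_i$, which has degree less than $n$, and invoking the net bound,
\[ \Big\|\sum_i x_i p_i\Big\|_\infty = \|Q\|_\infty < 2\max_j |Q(\theta_j)| \le 2C\sqrt{m/\pi} \le 3C\sqrt n, \]
where the last inequality uses $m = \lceil 2\pi n\rceil$, so that $2\sqrt{m/\pi}$ sits just below $2\sqrt{2n} \approx 2.83\sqrt n$.

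The main obstacle is the simultaneous calibration of the single parameter $m$: it must be large enough for the quadrature rule to be exact on $q_i^2$ (needing $m \ge 2n$) and for the Bernstein net argument to lose only a factor below $2$ (needing spacing $\pi/m \le 1/(2n)$, i.e. $m \ge 2\pi n$), yet small enough that the resulting bound $2C\sqrt{m/\pi}$ stays under $3C\sqrt n$ (needing $m < \tfrac{9}{4}\pi n$). That these constraints carve out a nonempty window $2\pi n \le m < \tfrac{9}{4}\pi n$ is precisely what makes the constant $3C$ attainable, and verifying the Bernstein net inequality with a constant strictly below $2$ on this grid is the one genuinely quantitative step to nail down.
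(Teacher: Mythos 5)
Your proposal is correct and follows essentially the same route as the paper: sample the signed sum at roughly $cn$ nodes where a quadrature rule is exact (so the $L_2$ hypothesis becomes an $\ell_2$ bound on the sample vectors), apply the Koml\'os conjecture to those vectors, and lift the grid bound to the Chebyshev norm via a Bernstein-plus-mean-value net argument. The paper instantiates this with $9n$ Chebyshev nodes, Gauss--Chebyshev quadrature and its Lemma 7 (net constant $5/3$), while you use $m=\lceil 2\pi n\rceil$ equispaced angles with trapezoidal exactness and net constant $2n/(n+1)<2$; the only slip is cosmetic --- since $m \ge 2\pi n$, the quantity $2\sqrt{m/\pi}$ sits just \emph{above} $2\sqrt{2n}$, not below, but your real constraint $m \le \tfrac{9}{4}\pi n$ does hold for every $n \ge 1$ (including $n=1$, where $m = 7 < \tfrac{9}{4}\pi$), so the bound $3C\sqrt{n}$ goes through.
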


\section{Background}

For the proof of Theorem 1 we will need a classical inequality of Bernstein \cite{lorentz1960}:
\begin{theorem}
Given a trigonometric polynomial $p(x) := \sum_{j=0}^n a_j \cos(jx)$ with $|p(x)| \le 1$ for all $x \in \setR$, we have $|p'(x)| \le n$ for all $x \in \setR$.
\end{theorem}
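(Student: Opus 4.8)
The plan is the classical comparison-with-$\sin$ argument, which reduces the inequality to a zero-counting fact for trigonometric polynomials. By hypothesis $\|p\|_\infty \le 1$, where $p(x) = \sum_{j=0}^n a_j\cos(jx)$ is a real trigonometric polynomial of degree at most $n$ (the cosine structure plays no role). It suffices to bound $p'(\theta_0)$ at an arbitrary fixed point $\theta_0$, and by replacing $p$ with $-p$ it is enough to rule out $p'(\theta_0) > n$. If $|p(\theta_0)| = 1$ then $\theta_0$ is a global extremum of $p$, so $p'(\theta_0) = 0 \le n$ and there is nothing to prove; hence assume $|p(\theta_0)| < 1$. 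I would then introduce the extremal comparison $g(x) = \sin\big(n(x-\theta_0)+\beta\big)$ with $\beta \in (-\pi/2,\pi/2)$ chosen so that $\sin\beta = p(\theta_0)$. Then $g(\theta_0) = p(\theta_0)$, $\|g\|_\infty = 1$, and $g'(\theta_0) = n\cos\beta \in (0,n]$, so under the contradiction hypothesis $g'(\theta_0) < p'(\theta_0)$, i.e. $h := p - g$ satisfies $h(\theta_0) = 0$ and $h'(\theta_0) > 0$.

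Next I would count, per period $[0,2\pi)$, the zeros of the degree-$\le n$ polynomial $h$. Since $\|p\|_\infty \le 1$, at each of the $n$ points where $g = +1$ we have $h \le 0$, and at each of the $n$ points where $g = -1$ we have $h \ge 0$; as the $2n$ extrema of $g$ interlace around the circle, $h$ takes opposite signs at the endpoints of each of the $2n$ arcs between consecutive extrema, forcing at least one zero per arc. Because $\cos\beta > 0$, the point $\theta_0$ lies strictly inside the arc on which $g$ climbs from $-1$ to $+1$, where $h \ge 0$ at the left endpoint and $h \le 0$ at the right endpoint. But $h(\theta_0) = 0$ with $h'(\theta_0) > 0$, so $h$ is negative just left of $\theta_0$ and positive just right of it; this produces a zero in each of the two subarcs adjoining $\theta_0$, in addition to $\theta_0$ itself, so this arc alone contributes at least three zeros. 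Hence $h$ has at least $2n+2$ zeros in a period. Since a trigonometric polynomial of degree $\le n$ that is not identically zero has at most $2n$ zeros per period, we conclude $h \equiv 0$, i.e. $p \equiv g$; but then $p'(\theta_0) = g'(\theta_0) \le n$, contradicting $p'(\theta_0) > n$. Applying this to $\pm p$ gives $|p'(\theta_0)| \le n$ for every $\theta_0$.

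The delicate step, and the main obstacle, is the zero count in degenerate configurations: when $p$ actually touches $\pm 1$ at some extremum of $g$, the corresponding arc-endpoint inequality $h \le 0$ or $h \ge 0$ becomes an equality, so that a tangential contact must be counted as a double zero to keep the total above $2n$, and one must check that $\theta_0$ still contributes genuinely extra zeros. This bookkeeping is routine once zeros are counted with multiplicity and one uses $h \not\equiv 0$, but it is what makes the elementary argument fiddly. As a cleaner alternative that sidesteps the case analysis, I would instead invoke M.\ Riesz's interpolation formula, which expresses $p'(\theta)$ as a fixed linear combination $\sum_{\nu=1}^{2n}\gamma_\nu\, p\big(\theta + \tfrac{(2\nu-1)\pi}{2n}\big)$ whose coefficients satisfy $\sum_\nu |\gamma_\nu| = n$; the triangle inequality then yields $|p'(\theta)| \le n\|p\|_\infty$ in a single line, at the cost of first establishing the identity and evaluating $\sum_\nu |\gamma_\nu|$.
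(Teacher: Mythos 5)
There is nothing in the paper to compare against here: the paper never proves this statement. It is imported as a classical inequality of Bernstein, with a citation to Lorentz, and is used purely as a black box (through Lemma 5) inside the proof of Lemma 7. Your proposal therefore has to be judged on its own merits, and it is correct: it is the classical comparison (zero-counting) proof of Bernstein's inequality. The reduction to excluding $p'(\theta_0) > n$, the comparator $g(x) = \sin(n(x-\theta_0)+\beta)$ with $\sin\beta = p(\theta_0)$, the weak sign alternation of $h = p - g$ at the $2n$ extrema of $g$ in a period, and the three extra zeros forced in the arc containing $\theta_0$ (from $h(\theta_0) = 0$, $h'(\theta_0) > 0$) together give at least $2n+2$ zeros of $h$ per period counted with multiplicity, which is impossible for a trigonometric polynomial of degree at most $n$ unless $h \equiv 0$; and $h \equiv 0$ contradicts $p'(\theta_0) > n \ge g'(\theta_0)$. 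The degenerate bookkeeping you flag closes exactly as you indicate: if $h$ vanishes at an extremum $t$ of $g$, then $p(t) = \pm 1$ is a global extremum of $p$, so $p'(t) = 0 = g'(t)$ and $t$ is a zero of multiplicity at least two, which precisely compensates for its being claimed by the two adjacent arcs. If you want to eliminate that case analysis entirely, note it suffices to bound $(\lambda p)'(\theta_0)$ for every $\lambda \in (0,1)$: then $|\lambda p| < 1$ everywhere, all endpoint inequalities become strict, every arc acquires a zero in its open interior, and the three zeros near $\theta_0$ are interior as well, so plain counting of distinct zeros finishes with no multiplicity discussion. Your alternative via M. Riesz's interpolation formula is also a legitimate classical route, but it merely relocates the work: establishing the identity and evaluating $\sum_\nu |\gamma_\nu| = n$ requires an interpolation or zero-counting argument of comparable depth to the one you just gave.
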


We will also need the following well-known equivalence between polynomials in $\cos x$ and trigonometric polynomials:

\begin{lemma}
For odd $n$, we have $\cos^n(x)= \frac{1}{2^{n-1}} \sum_{k=0}^{(n-1)/2} {n \choose k} \cos((n-2k) x)$, and for even $n$ we have $\cos^n(x) = \frac{1}{2^n} {n \choose {n/2}} + \frac{1}{2^{n-1}} \sum_{k=0}^{n/2 - 1} {n \choose k} \cos((n-2k)x)$. Thus any degree $n$ polynomial in $\cos x$ is also a trigonometric polynomial and satisfies Bernstein's inequality.
\end{lemma}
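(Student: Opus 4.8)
The plan is to reduce everything to Euler's formula and the binomial theorem. First I would write $\cos x = \tfrac{1}{2}(e^{ix}+e^{-ix})$, so that
\[
\cos^n x = \frac{1}{2^n}\left(e^{ix}+e^{-ix}\right)^n = \frac{1}{2^n}\sum_{k=0}^n \binom{n}{k} e^{i(n-k)x}e^{-ikx} = \frac{1}{2^n}\sum_{k=0}^n \binom{n}{k}e^{i(n-2k)x}.
\]
The key observation is the symmetry $\binom{n}{k}=\binom{n}{n-k}$ together with the fact that replacing $k$ by $n-k$ negates the frequency $n-2k$. Pairing the term indexed by $k$ with the term indexed by $n-k$ therefore produces $\binom{n}{k}\bigl(e^{i(n-2k)x}+e^{-i(n-2k)x}\bigr)=2\binom{n}{k}\cos((n-2k)x)$.

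Second, I would split on the parity of $n$. For odd $n$ no frequency $n-2k$ vanishes, and the index set $\{0,\dots,n\}$ splits perfectly into the pairs $\{k,n-k\}$ for $k=0,\dots,(n-1)/2$; summing the paired contributions and absorbing the factor $2$ yields $\frac{1}{2^{n-1}}\sum_{k=0}^{(n-1)/2}\binom{n}{k}\cos((n-2k)x)$. For even $n$ the central index $k=n/2$ gives frequency $0$ and contributes the unpaired constant $\frac{1}{2^n}\binom{n}{n/2}$, while the remaining indices pair up through $k=0,\dots,n/2-1$, giving the claimed formula. This is just careful index bookkeeping and is the only place that requires attention.

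Finally, for the concluding clause, I would write an arbitrary degree-$n$ polynomial $q$ in $\cos x$ as $q(\cos x)=\sum_{m=0}^n c_m \cos^m x$ and apply the identity to each power. The crucial point is that every frequency appearing in $\cos^m x$ has the form $m-2k$ with $0 \le k \le m$, hence is at most $m\le n$, so the whole sum collapses to a trigonometric polynomial $\sum_{j=0}^n a_j\cos(jx)$ of degree at most $n$. If in addition $|q(\cos x)|\le 1$ for all $x$, then Theorem 5 (Bernstein) applies verbatim and bounds the derivative by $n$. I do not anticipate any genuine difficulty: the whole lemma is an exercise in the binomial theorem, and the only things to track are that no frequency exceeds $n$ and that the leftover central term in the even case is handled correctly.
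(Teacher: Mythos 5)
Your proof is correct: the expansion via Euler's formula $\cos x = \tfrac{1}{2}(e^{ix}+e^{-ix})$, the binomial theorem, and the pairing of $k$ with $n-k$ (with the unpaired central constant $\tfrac{1}{2^n}\binom{n}{n/2}$ when $n$ is even) is the standard derivation, and your frequency bookkeeping ($0 \le$ frequency $\le n$) correctly justifies the concluding clause about degree-$n$ polynomials in $\cos x$. The paper states this lemma as well known and provides no proof of its own, so there is nothing to compare against; the only detail to fix is that Bernstein's inequality is Theorem 4 in the paper's numbering, not Theorem 5.
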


Finally, we use Spencer's theorem \cite{Spencer1985SixSD} to bound the $\ell_\infty$-discrepancy of vectors:

\begin{theorem}
Given vectors $\bm{v}_1, \dots, \bm{v}_n \in \setR^m$ with $\|\bm{v}_i\|_\infty \le 1$ there exist signs $x_1, \dots, x_n \in \{-1,1\}^n$ with $\|\sum_{i=1}^n x_i \bm{v}_i\|_\infty < 6\sqrt{m}$ when $m \ge n$, and we also have the upper bound $O(\sqrt{n \log(2m/n)})$.
\end{theorem}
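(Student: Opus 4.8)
Since this is Spencer's classical ``six standard deviations'' theorem, the plan is to reconstruct the proof by the \emph{partial coloring method} powered by the \emph{entropy} (pigeonhole) argument, which is what produces the explicit constant $6$. The core is the square case, where the number of vectors equals the dimension: \emph{$k$ vectors $\bm v_1,\dots,\bm v_k\in\setR^k$ with $\|\bm v_i\|_\infty\le1$ admit signs with $\|\sum_i x_i\bm v_i\|_\infty<6\sqrt k$}. For the bound $6\sqrt m$ when $m\ge n$ I would append $m-n$ zero vectors (each of $\ell_\infty$-norm $0\le1$) to the given list and invoke the square case with $k=m$; the zero vectors leave $\sum_i x_i\bm v_i$ unchanged, so the resulting signing works. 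The sharper $O(\sqrt{n\log(2m/n)})$ bound comes from the same iteration run directly on the $m\times n$ system, with the rounding widths tuned to the number of rows. Throughout I view the vectors as the columns of a matrix $A$ with entries in $[-1,1]$, so the quantity to control is $\|Ax\|_\infty=\max_j|\inn{A_j}{x}|$ over the rows $A_j$.

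The engine is a \emph{partial coloring lemma}: given widths $\Delta_1,\dots,\Delta_m>0$ with $\sum_j b(\Delta_j/\sqrt n)\le n/5$ for a suitable fixed function $b$ (positive, decreasing, with $b(\lambda)\to0$ rapidly as $\lambda\to\infty$), there is a partial coloring $\chi\in\{-1,0,1\}^n$ with at least $n/2$ nonzero entries and $|\inn{A_j}{\chi}|\le\Delta_j$ for every row $j$. To prove it I would partition the $j$-th coordinate axis into intervals of length $\Delta_j$ and consider the map sending a signing $x\in\{-1,1\}^n$ to the tuple of interval indices of $Ax$. For uniform random $x$ each $\inn{A_j}{x}$ has mean $0$ and variance $\|A_j\|_2^2\le n$, so a Chernoff tail bound caps the entropy of its interval index by $b(\Delta_j/\sqrt n)$; subadditivity of entropy then bounds the number of occupied cells by $2^{n/5}$, hence some cell contains at least $2^{4n/5}$ signings. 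By Kleitman's diameter theorem in the cube, a set this large must contain two signings $x\ne x'$ at Hamming distance $\ge n/2$, and then $\chi:=(x-x')/2\in\{-1,0,1\}^n$ is the required partial coloring.

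The theorem then follows by iteration. In round $t$ there are $n_t\approx n/2^t$ live variables and $m$ rows; I choose the common relative width $\lambda_t$ with $m\,b(\lambda_t)\le n_t/5$, set $\Delta_j=\lambda_t\sqrt{n_t}$, apply the lemma, permanently fix the newly colored variables, and recurse on the $\le n/2$ that remain. Each round at least halves the number of live variables, so the process ends after $O(\log n)$ rounds and the discrepancy accumulated in each coordinate is at most $\sum_t\lambda_t\sqrt{n_t}$. Because $\sqrt{n_t}$ decays geometrically while $\lambda_t$ grows only like $\sqrt{\log(m\,2^t/n)}$, this series converges: in the square case $m=n$ it sums to a constant multiple of $\sqrt n$, and in general to $O(\sqrt{n\log(2m/n)})$, giving the stated bounds.

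The main obstacle is the quantitative bookkeeping that pins the leading constant to $6$. One must fix the entropy function $b$ exactly, replace the Gaussian heuristic by a rigorous Chernoff estimate for genuine $\pm1$ sums that is sharp enough at the relevant widths, verify that $\sum_j b(\Delta_j/\sqrt n)\le n/5$ really delivers the Kleitman bound (so the recursion colors a constant fraction each round), and then optimize the per-round widths $\lambda_t$ against the geometric series. A naive execution of this plan already yields $O(\sqrt n)$ in the square case, but squeezing the constant down to $6$ is exactly where Spencer's delicate analysis is needed.
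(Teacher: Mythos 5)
This statement is Spencer's ``six standard deviations'' theorem: the paper does not prove it at all, but states it in Section~2 as known background, imported from \cite{Spencer1985SixSD}, and uses it as a black box in the proof of Theorem~1. So there is no in-paper proof to compare against; the natural benchmark is Spencer's original argument, and your outline follows precisely that route --- the entropy/partial-coloring method with iteration. Your reduction of the rectangular case $m \ge n$ to the square case by padding with $m-n$ zero vectors is correct, and is the standard way to obtain the $6\sqrt{m}$ form stated here.

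As a proof, however, your text has concrete gaps beyond the one you flag yourself. First, bounding the Shannon entropy of the cell-index variable $Y$ by $n/5$ does \emph{not} bound the number of occupied cells --- a variable with tiny entropy can have full support. What the entropy bound actually gives is $\max_y \Pr[Y=y] \ge 2^{-H(Y)} \ge 2^{-n/5}$, hence a single cell containing at least $2^{4n/5}$ signings; the argument must run through this min-entropy/pigeonhole step, not through a count of cells. Second, the constants you quote are mutually inconsistent: a cell of size $2^{4n/5}$ is not large enough for Kleitman's theorem to force two signings at Hamming distance $n/2$, since the Hamming ball of radius $n/4$ has size roughly $2^{0.811n}$, which exceeds $2^{0.8n}$. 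You must either shrink the entropy budget (to about $n/6$) or accept a smaller guaranteed fraction of colored variables per round; either fix propagates through the widths $\lambda_t$ and the final constant. Since pinning down the constant $6$ --- which is the actual content of the statement as the paper uses it --- is exactly the bookkeeping you defer, what you have is an accurate road map of the cited proof rather than a proof; if the goal were to replace the citation, it would not suffice as written.
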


\section{Proofs of main theorems}

The key technical lemma in the proof is the following characterization of polynomials with constant Chebyshev norm. 

\begin{lemma}
Suppose $p$ is a polynomial of degree at most $n$ such that $|p(\cos(\theta_0 + k\pi/9n))| \le 1$ for $k \in \{0, \dots, 9n-1\}$, for some fixed $\theta_0 \in [0,\pi/9n]$. Then in fact $\|p\|_\infty < 5/3$.
\end{lemma}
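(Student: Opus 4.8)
The plan is to transfer the problem from the algebraic polynomial $p$ to its associated trigonometric polynomial, and then exploit the fact that a trigonometric polynomial of degree at most $n$ cannot oscillate too quickly: being small on a sufficiently fine grid will force it to be small everywhere. Concretely, I would first set $q(\theta) := p(\cos\theta)$. Since $p$ has degree at most $n$, Lemma~5 shows that $q$ is a trigonometric polynomial of degree at most $n$, of the cosine-only form $\sum_{j=0}^n a_j \cos(j\theta)$ (it is even). Because $\cos$ maps $[0,\pi]$ bijectively onto $[-1,1]$, we have $\|p\|_\infty = \max_{\theta \in [0,\pi]} |q(\theta)|$. I let $M := \|p\|_\infty$, which is finite and attained at some $\theta^* \in [0,\pi]$ by continuity, and observe that the hypothesis says precisely that $|q(\theta_0 + k\pi/9n)| \le 1$ at the $9n$ equally spaced sample points.

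The second step is a derivative bound. Normalizing $r := q/M$ produces a trigonometric polynomial of degree at most $n$ with $\|r\|_\infty = 1$, so Bernstein's inequality (Theorem~4, applicable by Lemma~5) gives $|r'(\theta)| \le n$, i.e.\ $|q'(\theta)| \le Mn$ for every $\theta \in \setR$. The mild self-reference here---applying Bernstein to $q$ normalized by its own (a priori unknown) maximum---is harmless, since $M$ is a genuine finite constant that we are simply going to bound.

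The third step is a covering estimate: every $\theta^* \in [0,\pi]$ lies within distance $\pi/9n$ of some sample point $\theta_j := \theta_0 + j\pi/9n$. The sample points have gap $\pi/9n$ and span $[\theta_0,\ \theta_0 + (9n-1)\pi/9n]$; using $\theta_0 \in [0,\pi/9n]$ one checks that the two leftover intervals $[0,\theta_0)$ and $(\theta_0 + (9n-1)\pi/9n,\ \pi]$ each have length at most $\pi/9n$, while any interior point is within half a gap. This endpoint bookkeeping is the one place where the precise range of $\theta_0$ is used, and it is where I would be most careful.

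Finally, combining the three steps through the mean value theorem: choosing $\theta_j$ with $|\theta^* - \theta_j| \le \pi/9n$,
\[ M = |q(\theta^*)| \le |q(\theta_j)| + |\theta^* - \theta_j| \cdot \max_\theta |q'(\theta)| \le 1 + \frac{\pi}{9n} \cdot Mn = 1 + \frac{\pi}{9} M. \]
Rearranging yields $M \le \frac{9}{9-\pi} \approx 1.54 < 5/3$, as claimed. The main obstacle is not any single difficult step but rather the calibration: the grid spacing $\pi/9n$ and the Bernstein factor $n$ must multiply to a contraction constant $\pi/9 < 1$ that is small enough for $1/(1-\pi/9)$ to stay strictly below $5/3$. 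The choice of exactly $9n$ sample points is what secures this margin.
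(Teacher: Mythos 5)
Your proposal is correct and follows essentially the same route as the paper's proof: both pass to $q(\theta) = p(\cos\theta)$, apply Bernstein's inequality to $q$ normalized by its own maximum (via Lemma 5), and use the mean value theorem on the grid of spacing $\pi/9n$ to get the self-improving bound $M \le 1 + \tfrac{\pi}{9}M$, hence $M \le \tfrac{9}{9-\pi} < \tfrac{5}{3}$. The only cosmetic difference is bookkeeping: the paper compares $\theta^*$ to the nearest sample point on its left (treating $\theta^* \in [0,\theta_0]$ as a separate case), while you compare to the nearest sample point overall, handling both leftover endpoint intervals; the resulting constants are identical.
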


\begin{proof}
Denote $q(\theta) := p(\cos (\theta))$ and note that $\|p\|_\infty = \sup_{\theta \in [0, \pi]} |q(\theta)|$. By the extreme value theorem, this supremum is attained at some $\theta^* \in [0,\pi]$. Then either $\theta^* - \theta_0 \in [k\pi/9n, (k+1) \pi/9n]$ for some $k \in \{0, \dots, 9n-1\}$ or else $\theta^* \in [0, \theta_0]$. In the first case, Bernstein's inequality gives 
\begin{align*}|q(\theta^*)| - 1 & \le |q(\theta^*)| - |q(\theta_0 + k\pi/9n)| \\ & \le |q(\theta^*) - q(\theta_0 + k\pi/9n)| \\ & \le (\theta^* - \theta_0 - k\pi/9n) \cdot \sup_{\gamma \in [\theta_0 + \pi k/9n,\ \theta^*]} |q'(\gamma)| \\ & \le (\theta^* - \theta_0 - k\pi/9n) \cdot n \cdot |q(\theta^*)| \\ & \le \frac{\pi}{9} \cdot |q(\theta^*)|,  \end{align*}
so that $\|p\|_\infty = |q(\theta^*)| < 5/3$. Here we can indeed use Bernstein's inequality on $q$ due to Lemma 4. The case $\theta^* \in [0, \theta_0]$ follows similarly from $\theta_0 \le \pi/9n$.
\end{proof}

\begin{proof}[Proof of Theorem 1]
For each polynomial $p_i$ we may associate a vector $\bm{v}_i \in \setR^{9n}$ with $v_{ij} := p_i(\cos((j-1) \pi/9n))$. By the assumption $\|p_i\|_\infty \le 1$ we get $\|\bm{v}_i\|_\infty \le 1$ for all $i \in [n]$. Spencer's theorem gives signs $x_1, \dots, x_n \in \{-1,1\}$ such that the polynomial $q := \sum_{i=1}^n x_i p_i$ satisfies $|q(\cos (k \pi/9n))| \le 6\sqrt{9n}$ for all $k \in \{0,\dots,9n-1\}$.

Applying Lemma 7 to $q/(18\sqrt{n})$ with $\theta_0 := 0$ yields $\|q\|_\infty < 30\sqrt{n}$.
\end{proof}

\begin{remark}
For polynomials of degree at most $d \ge n$, we also get the more general upper bound of $O(\sqrt{n \log(2d/n)})$. Further, since Spencer's theorem has recently been made algorithmic (see, for example, \cite{DiscrepancyMinimization-Bansal-FOCS2010} and \cite{DiscrepancyMinimization-LovettMekaFOCS12}), we can compute signs matching this upper bound in polynomial time.
\end{remark}

\begin{remark}
After writing this article the author realized a similar argument in fact appeared in \cite{Spencer1985SixSD} to provide an exponential lower bound on the \textit{number} of distinct Rudin-Shapiro sequences for the case when $p_i$ are monomials.
\end{remark}

\begin{proof}[Proof of Theorem 2]
Indeed for any choice of signs $x_1, \dots, x_n \in \{-1,1\}$, denoting $q := \sum_{i=1}^n x_i T_i$,
\begin{align*}
 \|q\|_\infty = \sup_{\theta \in [0, \pi]} |q(\cos(\theta))| & \ge \sqrt{\frac{1}{\pi} \int_0^\pi q^2 (\cos(\theta)) \ \mathrm{d}\theta} \\ & \ge \sqrt{\frac{1}{\pi} \cdot \sum_{k=1}^n \underbrace{\int_0^\pi \cos^2 (k \theta) \ \mathrm{d}\theta}_{= \pi/2} + \frac{1}{\pi} \cdot \sum_{k \neq \ell} x_k x_\ell \underbrace{\int_0^\pi \cos(k \theta) \cos(\ell \theta) \ \mathrm{d}\theta}_{=0}} \\ & = \sqrt{n/2}. \qedhere \end{align*}
\end{proof}
\newpage
\begin{proof}[Proof of Theorem 3]
Denote $x_k := \cos(k\pi/9n - \pi/18n)$ for $k \in [n]$,  the roots of $T_{9n}$. For each $p_i$ we associate a vector $\bm{v}_i \in \setR^{9n}$ with $v_{ij} := p_i (x_j)$. In order to obtain an upper bound for the $\ell_2$-norm of these vectors, we make use of the \textit{Gauss-Chebyshev quadrature formula} \cite{NumAnalysis2002} 
\[\int_{0}^\pi p(\cos \theta) \ \mathrm{d} \theta = \frac{\pi}{n} \cdot \sum_{k=1}^n p(x_k),\]
which holds for all polynomials $p$ of degree less than $2n$. For completeness, we include a short proof. First, we claim that it holds for $p$ of degree less than $n$. Indeed, any such polynomial may be written as $p = \sum_{j=0}^{n-1} c_j T_j$ for some $c_j \in \setR$. Then 
\begin{align*} \int_{0}^\pi p(\cos \theta) \ \mathrm{d} \theta & = \int_{0}^\pi c_0 \ \mathrm{d} \theta + \sum_{j=1}^{n-1} c_j \underbrace{\int_{0}^\pi \cos (j \theta) \ \mathrm{d} \theta}_{=0} \\ &= \pi c_0, \end{align*}
and also
\begin{align*} \frac{\pi}{n} \cdot \sum_{k=1}^n p(x_k) & = \frac{\pi}{n} \cdot \sum_{k=1}^n c_0 + \frac{\pi}{n} \cdot \sum_{j=1}^{n-1} c_j \sum_{k=1}^n T_j(x_k) \\ &= \pi c_0 + \frac{\pi}{n} \cdot \sum_{j=1}^{n-1} \mathrm{Re}\Big(e^{-j\pi/2n} \cdot \underbrace{\sum_{k=1}^n e^{jk\pi/n}}_{= 0}\Big) \\ &= \pi c_0, \end{align*} 
as claimed. To see that the formula in fact holds for $p$ of degree less than $2n$, it suffices to divide $p$ by $T_n$ and observe the quotient and remainder both have degrees less than $n$, so that the correctness for $p$ follows from the formula for the remainder term.

Applying the Gauss-Chebyshev quadrature formula to each $p_i^2$ gives $\|\bm{v}_i\|_2 \le \sqrt{9n/\pi}$, thus the Koml\'os conjecture with constant $C$ implies the existence of signs $x_1, \dots, x_n \in \{-1,1\}$ so that $q := \sum_{i=1}^n x_i p_i$ satisfies $|q(x_k)| \le C\sqrt{9n/\pi}$ for $k \in [n]$. Applying Lemma 7 with $\theta_0 := \pi/18n$ yields $\|q\|_\infty < 3C \sqrt{n}$.
\end{proof}
{\Large \textbf{Acknowledgments}}
\\
\\
We would like to thank Anup Rao and Rodrigo Angelo for helpful discussions, Thomas Rothvoss for feedback in early drafts of this work, and the anonymous referee for finding (and fixing) a small error in the proof of Lemma 7.

\bibliographystyle{alpha}
\bibliography{BalancingPolynomials}
 
\end{document}